\theoremstyle{plain}
\newtheorem{theorem}{Theorem}[section]
\newtheorem{prop}[theorem]{Proposition}
\theoremstyle{definition}
\newtheorem{definition}[theorem]{Definition}
\newtheorem{remark}[theorem]{Remark}
\newtheorem{cor}[theorem]{Corollary}
\theoremstyle{remark}
\begin{document}

\title [An approximation problem in the space of bounded operators]{An approximation problem in the space of bounded operators}

\author[Arpita Mal]{Arpita Mal}

\address[ ]{
	 Kolkata 700032\\ India.}
\email{arpitamalju@gmail.com}


\thanks{The author would like to thank Prof. Kallol Paul for his valuable comments on this paper. }

\subjclass[2010]{Primary 46B28, Secondary 41A50, 46B25, 47L05}
\keywords{Best approximation; distance formulae;  Birkhoff-James orthogonality; linear operators; $L^1-$predual}



\date{}
\maketitle
\begin{abstract}
 For Banach spaces $X,Y,$ we consider a distance problem in the space of bounded linear operators $\mathcal{L}(X,Y).$ Motivated by a recent paper \cite{RAO21}, we obtain sufficient conditions so that for a compact operator $T\in\mathcal{L}(X,Y)$ and a closed subspace $Z\subset Y,$ the following equation holds, which relates global approximation with local approximation:
 \[d(T,\mathcal{L}(X,Z))=\sup\{d(Tx,Z):x\in X,\|x\|=1\}.\]
 In some cases, we show that the supremum is attained at an extreme point of the corresponding unit ball. Furthermore, we obtain some situations when the following equivalence holds:
 $$T\perp_B \mathcal{L}(X,Z)\Leftrightarrow T^{**}x_0^{**}\perp_B Z^{\perp\perp}\Leftrightarrow T^{**}\perp_B\mathcal{L}(X^{**},Z^{\perp\perp}),$$ for some $x_0^{**}\in X^{**}$ satisfying $\|T^{**}x_0^{**}\|=\|T^{**}\|\|x_0^{**}\|,$ where $Z^\perp$ is the annihilator of $Z.$ One such situation is when $Z$ is an $L^1-$predual space and an $M-$ideal in $Y$ and $T$ is a multi-smooth operator of finite order. Another such situation is when $X$ is an abstract $L_1-$space and $T$ is a multi-smooth operator of finite order. Finally, as a consequence of the results, we obtain a sufficient condition for proximinality of a subspace $Z$  in $Y.$ 
\end{abstract}

\section{Introduction}
In this paper, our aim is to study distance problem and Birkhoff-James orthogonality in the space of bounded linear operators. We approximate the distance of a bounded linear operator from a subspace of operators with the distance of its image from a subspace. 
To state the problem, we first introduce necessary notations and terminologies.

In this paper, $X,Y$ denote real Banach spaces and $Z$ denotes a closed subspace of $Y.$ Let $S_X$ and $B_X$ denote the unit sphere and the unit ball of $X,$ i.e., $S_{X}=\{x\in X:\|x\|=1\}$ and $B_{X}=\{x\in X:\|x\|\leq 1\}.$ Suppose $E_X$ denotes the set of all extreme points of $B_X$ and $X^{*}$ denotes the dual space of $X.$ We always assume that $X$ is canonically embedded in its bidual $X^{**}.$ The symbol $\mathcal{L}(X,Y)~(\mathcal{K}(X,Y))$ denotes the space of all bounded (compact) linear operators from $X$ to $Y.$ For $T\in \mathcal{L}(X,Y),$ $M_T=\{x\in S_X:\|Tx\|=\|T\|\},$ the collection of all unit vectors of $X,$ at which $T$ attains its norm. For a non-zero element $x\in X,$ suppose $J(x)=\{x^*\in S_{X^*}:x^*(x)=\|x\|\}.$ Recall that $J(x)$ is a non-empty, weak*compact, convex subset of $S_{X^*}.$ The symbol $E_{J(T)}$ denotes the set of all extreme points of $J(T).$ A non-zero element $x\in X$ is said to be a multi-smooth point of finite order (or $k-$smooth) \cite{KS,LR} if $J(x)$ contains finitely many (exactly $k$) linearly independent functionals. In other words, $x$ is a multi-smooth point of finite order if $Span(J(x))$ is finite-dimensional. We say that $x$ is smooth if $J(x)$ is singleton and $X$ is smooth if $x$ is smooth for all non-zero $x\in X.$ Similarly, a non-zero operator $T\in\mathcal{L}(X,Y)$ is a multi-smooth operator of finite order if $Span(J(T))$ is finite-dimensional, where $J(T)=\{f\in S_{\mathcal{L}(X,Y)^*}:f(T)=\|T\|\}.$ In particular, $T$ is said to be a smooth operator if $J(T)$ is singleton. For the study of smooth and multi-smooth ($k-$smooth) operators see \cite{GY93,GRZA96,LR,MDP,MPD,MP20,MPRS,SPMR}.    \\
A classical problem in approximation theory is the distance problem. For $x\in X$ and a subspace $W\subset X,$ $d(x,W)=\inf_{w\in W}\|x-w\|$ is the distance of $x$ from $W.$  An element $w_0\in W$ is said to be a best approximation to $x$ out of $W$ if $\|x-w_0\|=d(x,W).$ We denote the collection of all best approximation(s) to $x$ out of $W$ by $\mathscr{L}_{W}(x),$ i.e., $\mathscr{L}_W(x)=\{w_0\in W:\|x-w_0\|=d(x,W)\}.$ A subspace $W\subset X$ is said to be a proximinal subspace of $X$ if $\mathscr{L}_W(x)\neq \emptyset$ for all $x\in X.$ Similarly, in operator space, for $T\in \mathcal{L}(X,Y)$ and a subspace $\mathscr{V}\subset \mathcal{L}(X,Y),$ $d(T,\mathscr{V})=\inf_{S\in \mathscr{V}}\|T-S\|$ and $\mathscr{L}_\mathscr{V}(T)=\{S_0\in \mathscr{V}:\|T-S_0\|=d(T,\mathscr{V})\}.$ For some recent study on best approximation and distance formula in operator spaces and $C^*$- algebra follow \cite{GS,MP22,Sb,SMP}. Note that, the notions of best approximation and Birkhoff-James orthogonality are closely related. For $x,y\in X,$ we say that $x$ is Birkhoff-James orthogonal \cite{B,J} to $y$ if $\|x+\lambda y\|\geq \|x\|$ for all scalars $\lambda$ and we denote it by $x\perp_B y.$ For a subspace $W\subset X,$ we say that $x\perp_B W$ if  $x\perp_B w$ for all $w\in W.$ It is now straightforward to check that $w_0\in \mathscr{L}_W(x)$ if and only if $x-w_0\perp_B W.$
Suppose that $K$ is a compact Hausdorff space. Let $C(K,Y)$ be the space of all continuous functions defined from $K$ to $Y$ equipped with the supremum norm. From a classical result \cite[Th. 2.4]{LC} by Light and Cheney, we know that if $f\in C(K,Y),$ then 
\[d(f,C(K,Z))=\sup_{k\in K}d(f(k),Z)=d(f(k_0),Z)\] for some $k_0\in K.$ The above distance formula provides a relation between global approximation and local approximation. Motivated by the above approximation result, recently in \cite{RAO21}, Rao studied analogous problem in the space of bounded linear operators. More precisely, he raised the question that for $T\in \mathcal{K}(X,Y),$ when the following minimax formula holds:
\begin{equation}\label{eq-minimax}
	d(T, \mathcal{L}(X,Z))=\sup_{x\in S_X}d(Tx,Z).
\end{equation} 
It is also interesting to ask when the supremum in (\ref{eq-minimax}) is attained. Whenever the the supremum is attained at some  $x_0\in S_X,$  we can further ask whether  $x_0$ can be chosen from $E_X.$  To answer these questions, we need to introduce a few notions. A Banach space $X$ is said to be an $L^1-$predual space \cite{L} if $X^*$ is isometrically isomorphic to $L^1(\mu)$ for a positive measure $\mu.$ A subspace $Z$ of $Y$ is said to be an $M-$ideal \cite{HWW93} in $Y,$ if there is a linear projection $P:Y^*\to Y^*$ such that  $\|y^*\|=\|Py^*\|+\|y^*-Py^*\|$ for all $y^*\in Y^*$ and $\ker(P)=Z^\perp,$ where $Z^\perp$ is the annihilator of $Z,$ i.e., $Z^\perp=\{y^*\in Y^*:y^*(z)=0~\forall~z\in Z\}.$\\
Note that, for any $x\in S_X$ and $S\in \mathcal{L}(X,Z),$ $\|Tx-Sx\|\geq d(Tx,Z).$ Thus, $$\|T-S\|=\sup_{x\in S_X}\|Tx-Sx\|\geq\sup_{x\in S_X}d(Tx,Z).$$ Now, taking infimum over $S\in \mathcal{L}(X,Z),$ we get
\begin{equation}\label{eq-inequal}
	d(T,\mathcal{L}(X,Z))\geq \sup_{x\in S_X}d(Tx,Z).
\end{equation}
In \cite[Th. 1]{RAO21} Rao proved the existence of the minimax formula (\ref{eq-minimax}) with the attainment of the supremum at an extreme point of $B_X$ assuming that $X$ is a reflexive, separable Banach space, $Z$ is an $L^1-$predual space and also an $M-$ideal in $Y.$ On the other hand, assuming that $X$ is a  separable Banach space, $Z$ is an $L^1-$predual space and an $M-$ideal in $Y,$ Rao in  \cite[Th. 6]{RAO21}	proved that under a local condition (a suitable smoothness condition) on $T$  the minimax formula  (\ref{eq-minimax}) holds. Both the proofs of these theorems are based on a lifting theorem from \cite[Th. II.2.1]{HWW93}.\\
Many researchers are devoted to the study of Birkhoff-James orthogonality in the space of operators (see \cite{BS,MP,SMP,SPM2} and the references therein). Note that, if there exists a vector $x_0\in M_T$ such that $Tx_0\perp_B Z,$ then for all $S\in \mathcal{L}(X,Z),$ $\|T-S\|\geq \|Tx_0-Sx_0\|\geq \|Tx_0\|=\|T\|.$ Therefore, in this case, $T\perp_B \mathcal{L}(X,Z).$ On the other hand, observe that if the supremum in (\ref{eq-minimax}) is attained at some $x_0\in S_X$ and $T\perp_B \mathcal{L}(X,Z),$ then from $\|T\|=d(T,\mathcal{L}(X,Z))=d(Tx_0,Z)\leq \|Tx_0\|=\|T\|,$ it follows that $d(Tx_0,Z)= \|Tx_0\|,$ i.e., $Tx_0\perp_B Z.$ In \cite{RAO21}, Rao used this approach to prove the implication $T\perp_B\mathcal{L}(X,Z)\Rightarrow Tx_0\perp_B Z$ for some $x_0\in M_T,$ whenever the supremum in (\ref{eq-minimax}) is attained.\\
In this paper, we prove the existence of formula (\ref{eq-minimax}) whenever $T$ is a multi-smooth operator of finite order and $T\perp_B \mathcal{L}(X,Z).$ In this case, we do not assume any restriction on the space $X.$ We only assume that $Z$ is an $L^1-$predual space and an $M-$ideal in $Y.$ Moreover, we prove the following equivalence
\begin{equation}\label{equiv}
	T\perp_B \mathcal{L}(X,Z)\Leftrightarrow T^{**}x_0^{**}\perp_B Z^{\perp\perp}\Leftrightarrow T^{**}\perp_B\mathcal{L}(X^{**},Z^{\perp\perp})
\end{equation} 
for some $x_0^{**}\in M_{T^{**}}.$ Furthermore, if $X^*$ is assumed to be smooth, then $x_0^{**}$ can be chosen from $M_{T^{**}}\cap E_{X^{**}}$ (see Theorem \ref{th-gen} and Remark \ref{rem-equiv}). In addition, if $X$ is assumed to be reflexive, then we show that the supremum in (\ref{eq-minimax}) is attained at some $x_0\in M_T.$
On the other hand, we show that (\ref{eq-minimax}) holds for an arbitrary closed subspace $Z$ of $Y,$ if we assume that $X^*$ is an $L^1-$predual space (more generally, if $X^{**}$ has $L^1-$property according to Definition \ref{def-l1}), $T$ is a multi-smooth operator of finite order and $T\perp_B \mathcal{L}(X,Z).$ Moreover, we show that in this case, (\ref{equiv}) holds for some $x_0^{**}\in M_{T^{**}}\cap E_{X^{**}}$ (see Theorem \ref{th-propl1}, Remark \ref{rem-equiv} and Remark \ref{rem-l1prop}). Finally, we provide another situation when formula (\ref{eq-minimax}) holds and the supremum is attained at some $x_0\in M_T\cap E_X.$ As a consequence of the results, we prove that if $\mathcal{L}(\ell_1^n,Z)$ is a proximinal subspace of $\mathcal{L}(\ell_1^n,Y),$ then $Z$ is a proximinal subspace of $Y,$ provided  $Z$ is an arbitrary closed subspace of $Y$ and each non-zero element of $Y$ is a multi-smooth point of finite order.   We would like to mention that the approach used in this paper to prove the non-trivial part is completely different from \cite{RAO21}. In particular, we do not use the lifting theorem from \cite[Th. II.2.1]{HWW93}.

\section{Main results}
We begin this section with an easy proposition.
\begin{prop}
	Let $X,Y$ be Banach spaces and $Z$ be a closed subspace of $Y.$ Let $T\in \mathcal{L}(X,Y).$ Suppose there exists $S\in \mathscr{L}_{\mathcal{L}(X,Z)}(T)$ such that $T-S$ is smooth and $M_{T-S}\neq \emptyset.$ Then there exist $x_0\in E_X,~ y_0^*\in E_{Y^*}\cap Z^\perp$ such that the following hold.
	\begin{eqnarray*}
		d(T, \mathcal{L}(X,Z))&=&\sup_{x\in S_X}d(Tx,Z)=d(Tx_0,Z)\\
		                      &=&\sup\{y^*(Tx):x\in E_X,y^*\in E_{Y^*}\cap Z^\perp\}=y_0^*(Tx_0).
	\end{eqnarray*}
\end{prop}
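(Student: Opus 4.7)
The strategy is to exploit the Birkhoff--James characterization of best approximation together with the rigidity imposed by smoothness of $T-S$, and to read off the extreme-point information from the structure of $J(T-S)$. Since $S\in \mathscr{L}_{\mathcal{L}(X,Z)}(T)$, we have $(T-S)\perp_B \mathcal{L}(X,Z)$, so by the James characterization there is some $f_0\in J(T-S)$ that annihilates $\mathcal{L}(X,Z)$. Smoothness of $T-S$ tells us that $J(T-S)=\{f_0\}$, so this functional is forced upon us.

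First I would identify $f_0$ explicitly using the norm-attainer $x_0\in M_{T-S}$. For any $y_0^*\in J((T-S)x_0)$, the evaluation $g(U)=y_0^*(Ux_0)$ satisfies $\|g\|\le 1$ and $g(T-S)=\|(T-S)x_0\|=\|T-S\|$, so $g\in J(T-S)=\{f_0\}$; hence $f_0(U)=y_0^*(Ux_0)$. Inserting a rank-one operator $x\mapsto x^*(x)z$ (with $z\in Z$ and $x^*$ chosen so that $x^*(x_0)\ne 0$) into the condition $f_0|_{\mathcal{L}(X,Z)}=0$ immediately forces $y_0^*\in Z^\perp$.

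Next, the same singleton rigidity upgrades both $x_0$ and $y_0^*$ to extreme points. To show $(T-S)x_0$ is a smooth point of $Y$, I observe that any two candidates $y_1^*,y_2^*\in J((T-S)x_0)$ produce evaluations $U\mapsto y_i^*(Ux_0)$ that both lie in the singleton $J(T-S)$, so they coincide, and rank-one testing forces $y_1^*=y_2^*$. A standard convexity check (if $y_0^*=(y_1^*+y_2^*)/2$ with $y_i^*\in B_{Y^*}$, then equality in $y_0^*((T-S)x_0)=\|(T-S)x_0\|$ puts both $y_i^*$ back in $J((T-S)x_0)=\{y_0^*\}$) gives $y_0^*\in E_{Y^*}$. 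For $x_0\in E_X$: if $x_0=(x_1+x_2)/2$ with $x_i\in B_X$, evaluating $y_0^*$ at $(T-S)x_0$ yields $\|x_i\|=1$, $\|(T-S)x_i\|=\|T-S\|$, and $y_0^*\in J((T-S)x_i)$; so $U\mapsto y_0^*(Ux_i)$ both sit in $\{f_0\}$, and rank-one testing (with $y\in Y$ chosen so that $y_0^*(y)\ne 0$) yields $x_1=x_2$.

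Finally I would assemble the four equalities. Because $Sx_0\in Z\subset\ker y_0^*$,
\[\|T-S\|=y_0^*((T-S)x_0)=y_0^*(Tx_0),\]
while Hahn--Banach gives $y_0^*(Tx_0)\le d(Tx_0,Z)\le\|Tx_0-Sx_0\|\le\|T-S\|$, forcing every intermediate quantity to equal $d(T,\mathcal{L}(X,Z))$; combined with (\ref{eq-inequal}) this attains the sup over $S_X$ at $x_0$. For the last supremum, any $x\in E_X$ and $y^*\in E_{Y^*}\cap Z^\perp$ satisfies $y^*(Tx)=y^*((T-S)x)\le\|T-S\|$, so the sup is bounded above by $\|T-S\|$ and is achieved at $(x_0,y_0^*)$. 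I expect the main obstacle to be the extremality argument for $x_0$: it is the one place where the singleton structure of $J(T-S)$, the specific form of $f_0$, and rank-one testing in $\mathcal{L}(X,Y)$ must all be combined; everything else is a direct consequence of the Birkhoff--James framework.
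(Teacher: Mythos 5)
Your proof is correct, and the overall strategy---pass to $T_0=T-S\perp_B\mathcal{L}(X,Z)$, use smoothness to pin down a unique supporting functional, localize at a norm-attaining point, and squeeze against the elementary inequality $d(T,\mathcal{L}(X,Z))\geq\sup_{x\in S_X}d(Tx,Z)$---is the same as the paper's. The difference lies in the execution of the structural steps. The paper outsources them to \cite[Th. 3.3]{SPMR}, which it invokes three times: to get $M_{T_0}=\{\pm x_0\}$ (whence $x_0\in E_X$ by a convexity argument on $M_{T_0}$), to transfer $T_0\perp_B A$ to $T_0x_0\perp_B Ax_0$ for rank-one $A$ (whence $T_0x_0\perp_B Z$), and to conclude that $T_0x_0$ is a smooth point of $Y$ (whence $y_0^*\in E_{Y^*}$ via extremality of $J(T_0x_0)$, with $y_0^*\in Z^\perp$ then extracted from James's theorem applied to $T_0x_0\perp_B Z$). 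You instead apply James's theorem once, at the operator level, identify the unique element of $J(T_0)$ as the elementary tensor $U\mapsto y_0^*(Ux_0)$, and recover all of these facts by testing that identity against rank-one operators; in particular, your proof that $x_0\in E_X$ runs through the functional $y_0^*$ and the singleton $J(T_0)$ rather than through the description of the norm-attainment set. The net effect is a self-contained argument that in essence reproves the relevant portion of \cite[Th. 3.3]{SPMR} inline, at the cost of some length; the paper's version is shorter but leans on the external characterization of operator smoothness.
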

\begin{proof}
	From $S\in \mathscr{L}_{\mathcal{L}(X,Z)}(T),$ it follows that $T_0\perp_B \mathcal{L}(X,Z),$ where $T_0=T-S.$ Since $T_0$ is smooth and $M_{T_0}\neq \emptyset,$ from \cite[Th. 3.3]{SPMR}, we get $M_{T_0}=\{\pm x_0\}$ for some $x_0\in S_X.$ Observe that $x_0\in E_X.$ For otherwise, there exist $x_1,x_2\in B_X$ such that $x_1\neq x_2$ and  $x_0=\frac{1}{2}x_1+\frac{1}{2}x_2.$  Now, from 
	\[\|T_0\|=\|T_0x_0\|\leq \frac{1}{2}\|T_0x_1\|+\frac{1}{2}\|T_0x_2\|\leq \frac{1}{2}\|T_0\|+\frac{1}{2}\|T_0\|=\|T_0\|,\] we get that $\|T_0x_1\|=\|T_0x_2\|=\|T_0\|,$ i.e., $x_1,x_2\in M_{T_0}.$ Thus, $x_1=-x_2$ and so $x_0=0,$ a contradiction. Choose $x^*\in J(x_0).$ Let $z\in Z.$ Consider $A\in \mathcal{L}(X,Z)$ defined as $Ax=x^*(x)z$ for all $x\in X.$ Then $T_0\perp_B A.$ By \cite[Th. 3.3]{SPMR}, we get $T_0x_0\perp_B Ax_0\Rightarrow T_0x_0\perp_B z.$ Since $z\in Z$ is chosen arbitrarily, we have $T_0x_0\perp_B Z.$ Thus, $$d(Tx_0,Z)=d(T_0x_0,Z)=\|T_0x_0\|=\|T_0\|=d(T_0,\mathcal{L}(X,Z))=d(T,\mathcal{L}(X,Z)).$$
	On the other hand,
	\begin{equation}\label{eq-sup}
		\sup_{x\in S_X}d(Tx,Z)\geq d(Tx_0,Z)= d(T,\mathcal{L}(X,Z))\geq \sup_{x\in S_X}d(Tx,Z),
	\end{equation}  where the last inequality follows from (\ref{eq-inequal}). This completes the proof of the first part.\\
    Now, we prove the second part. Since $T_0$ is smooth, again from \cite[Th. 3.3]{SPMR}, we get, $T_0x_0$ is smooth, i.e., $J(T_0x_0)=\{y_0^*\}$ for some $y_0^*\in S_{Y^*}.$ Since $J(T_0x_0)$ is convex, $y_0^*$ is an extreme point of $J(T_0x_0).$ Observe that $J(T_0x_0)$ is an extremal subset of $B_{Y^*}.$ Therefore, $y_0^*\in E_{Y^*}.$ Now, from \cite[Th. 2.1]{J} and  $T_0x_0\perp_B Z,$ it follows that $y_0^*\in Z^{\perp}.$  Thus,
    \begin{equation}\label{eq-sm1}
    	y_0^*(Tx_0)=y_0^*(Tx_0-Sx_0)=y_0^*(T_0x_0)=\|T_0x_0\|=\|T_0\|=d(T,\mathcal{L}(X,Z)).
    	\end{equation}
    On the other hand, observe that for each $x\in E_X,$ $y^*\in E_{Y^*}\cap Z^\perp$ and $A\in \mathcal{L}(X,Z),$ we have
    $$y^*(Tx)=y^*(Tx-Ax)\leq \|Tx-Ax\|\leq\|T-A\|.$$ Thus, 
    \[\sup\{y^*(Tx):x\in E_X,y^*\in E_{Y^*}\cap Z^\perp\}\leq \inf_{A\in \mathcal{L}(X,Z)}\|T-A\|=d(T,\mathcal{L}(X,Z)).\]
    The above inequality together with (\ref{eq-sm1}) completes the proof of the second part.
\end{proof}

To prove the next theorem, we use the extremal structure of the unit ball of $\mathcal{K}(X,Y)^*.$ From \cite[Th. 1.3]{RS}, we note that 
\begin{equation}\label{eq-rs}
	E_{\mathcal{K}(X,Y)^*}=\{x^{**}\otimes y^*:x^{**}\in E_{X^{**}},y^*\in E_{Y^*}\}, 
\end{equation}
where $x^{**}\otimes y^*(S)=x^{**}(S^*y^*)$ for $S\in \mathcal{L}(X,Y).$
Now, we are ready to prove our desired theorem.
\begin{theorem}\label{th-gen}
Let $X,Y$ be Banach spaces. Suppose $Z$ is a subspace of $Y$ such that $Z$ is an $L^1-$predual space and an $M-$ideal in $Y.$ Let $T\in \mathcal{K}(X,Y)$ be a multi-smooth operator of finite order. Suppose that $T\perp_B \mathcal{L}(X,Z).$ Then the following hold.\\
\rm(i) $d(T, \mathcal{L}(X,Z))=\sup_{x\in S_X}d(Tx,Z).$\\
\rm(ii) $T^{**}x_0^{**}\perp_B Z^{\perp \perp}$ for some $x_0^{**}\in M_{T^{**}}.$\\
\rm(iii)   $T^{**}\perp_B \mathcal{L}(X^{**},Z^{\perp\perp}).$\\
\rm(iv) There exists  $x_0^{**}\in M_{T^{**}}$ such that
 \begin{eqnarray*}
	d(T^{**},\mathcal{L}(X^{**},Z^{\perp\perp}))&=&\sup_{x^{**}\in S_{X^{**}}}d(T^{**}x^{**},Z^{\perp\perp})\\
	&=&d(T^{**}x_0^{**},Z^{\perp\perp})=d(T, \mathcal{L}(X,Z)).
\end{eqnarray*}
Additionally, if we assume that $X^*$ is smooth, then in \rm(ii) and \rm(iv), we may choose $x_0^{**}$ from $M_{T^{**}}\cap E_{X^{**}}.$
\end{theorem}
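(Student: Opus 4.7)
The plan is to locate an extreme norming functional $\tilde f_0=x_0^{**}\otimes y_0^*\in S_{\mathcal{K}(X,Y)^*}$ of $T$ that vanishes on $\mathcal{K}(X,Z)$, and then to read off all four conclusions from the pair $(x_0^{**},y_0^*)$. The driving ingredients are James' theorem, the multi-smoothness of $T$ (which forces the norming set to lie in a finite-dimensional subspace), the combined $L^1-$predual and $M-$ideal hypotheses on $Z$ (which together promote $\mathcal{K}(X,Z)$ to an $M-$ideal in $\mathcal{K}(X,Y)$ via a classical theorem of Werner, so that $\mathcal{K}(X,Z)^\perp$ is an $L-$summand of $\mathcal{K}(X,Y)^*$), and the identification (\ref{eq-rs}) of extreme points of $B_{\mathcal{K}(X,Y)^*}$ as elementary tensors.

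For the construction, James' theorem applied to $T\perp_B\mathcal{L}(X,Z)$ produces $f\in J(T)\subset S_{\mathcal{L}(X,Y)^*}$ with $f|_{\mathcal{L}(X,Z)}=0$. Restricting to $\mathcal{K}(X,Y)$, I obtain $\tilde f\in \tilde J(T):=\{g\in S_{\mathcal{K}(X,Y)^*}:g(T)=\|T\|\}$ with $\tilde f|_{\mathcal{K}(X,Z)}=0$, and multi-smoothness of $T$ gives $\dim\mathrm{Span}(\tilde J(T))<\infty$. The set $K:=\tilde J(T)\cap\mathcal{K}(X,Z)^\perp$ is then a non-empty, weak*-compact, convex subset of a finite-dimensional space, so it admits an extreme point $\tilde f_0$. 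Using the $L-$summand structure of $\mathcal{K}(X,Z)^\perp$, I verify that $\tilde f_0$ is in fact extreme in $\tilde J(T)$: in any representation $\tilde f_0=\tfrac12(q_1+q_2)$ with $q_i\in\tilde J(T)$, the uniqueness and norm-additivity of the $L-$summand decomposition force the components of $q_i$ in $\mathcal{K}(X,Z)^\perp$ to have norm one, and hence $q_i\in\mathcal{K}(X,Z)^\perp$. Since $\tilde J(T)$ is a face of $B_{\mathcal{K}(X,Y)^*}$, $\tilde f_0$ is extreme in $B_{\mathcal{K}(X,Y)^*}$, and (\ref{eq-rs}) gives $\tilde f_0=x_0^{**}\otimes y_0^*$ with $x_0^{**}\in E_{X^{**}}$ and $y_0^*\in E_{Y^*}$. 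Finally, testing $\tilde f_0$ on the rank-one operators $x\mapsto\phi(x)z$ with $\phi\in X^*$, $z\in Z$ forces $y_0^*\in Z^\perp$.

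The identity $x_0^{**}(T^*y_0^*)=\tilde f_0(T)=\|T\|$ then yields $\|T^{**}x_0^{**}\|=\|T\|$ (so $x_0^{**}\in M_{T^{**}}$) with $y_0^*$ norming $T^{**}x_0^{**}$; as $y_0^*\in Z^\perp$ annihilates $Z^{\perp\perp}$, James gives (ii). Statement (iii) is immediate from $\|T^{**}+\lambda S\|\ge\|(T^{**}+\lambda S)x_0^{**}\|\ge\|T^{**}x_0^{**}\|=\|T^{**}\|$ for $S\in\mathcal{L}(X^{**},Z^{\perp\perp})$, and (iv) follows by combining (iii) with the identity $d(T,\mathcal{L}(X,Z))=\|T\|$ supplied by the orthogonality hypothesis. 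For (i), compactness of $T$ makes $T^{**}$ weak*-to-norm continuous on bounded sets; Goldstine produces a net $x_\alpha\in B_X$ with $x_\alpha\xrightarrow{w^*}x_0^{**}$, giving $Tx_\alpha\to T^{**}x_0^{**}$ in norm, and after rescaling to unit vectors one gets $\sup_{x\in S_X}d(Tx,Z)\ge d(T^{**}x_0^{**},Z)=\|T\|$, matching (\ref{eq-inequal}). The additional claim under $X^*$ smooth is automatic, since the construction already delivers $x_0^{**}\in E_{X^{**}}$.

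The delicate step will be the extraction of the extreme pair $(x_0^{**},y_0^*)$ with $y_0^*\in Z^\perp$: it requires promoting the $M-$ideal property from $Z\subset Y$ to $\mathcal{K}(X,Z)\subset\mathcal{K}(X,Y)$ (this is where the $L^1-$predual assumption really enters, through the metric approximation property of $Z^*=L^1(\mu)$) and then, using the resulting $L-$summand structure, showing that an extreme point of $K$ remains extreme in $\tilde J(T)$. Once this structural step is in place, identifying the factors via (\ref{eq-rs}) and deriving (i)--(iv) is routine.
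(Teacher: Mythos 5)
Your derivation of (i)--(iv) from a single extreme functional $\tilde{f}_0=x_0^{**}\otimes y_0^*\in \tilde J(T)\cap\mathcal{K}(X,Z)^\perp$ is sound, and your $L$-summand argument showing that an extreme point of $\tilde J(T)\cap\mathcal{K}(X,Z)^\perp$ remains extreme in $B_{\mathcal{K}(X,Y)^*}$ is correct \emph{granted its premise}. The gap is that premise: you never establish that $\mathcal{K}(X,Z)$ is an $M$-ideal in $\mathcal{K}(X,Y)$, and the tools you invoke do not deliver it under the stated hypotheses. Werner's tensor-product theorem promotes the $M$-ideal $Z\subset Y$ to the injective tensor products, i.e., to the \emph{approximable} operators; since $Z^*\cong L^1(\mu)$ gives $Z$ the metric approximation property, one does get that $\mathcal{K}(X,Z)$ coincides with the approximable operators into $Z$ and is an $M$-ideal in the space of approximable operators into $Y$. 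But $Y$ is an arbitrary Banach space here, so $\mathcal{K}(X,Y)$ may strictly contain the approximable operators, and being an $M$-ideal in a subspace does not pass to a superspace. Hence the $L$-summand decomposition of $\mathcal{K}(X,Y)^*$ on which your whole construction rests is unsupported. A further warning sign: if your argument were complete it would use neither the multi-smoothness of $T$ (Krein--Milman already supplies extreme points of the weak*-compact convex set $K$) nor anything about $Z$ beyond the MAP, and it would always yield $x_0^{**}\in E_{X^{**}}$ without assuming $X^*$ smooth --- that is, it would prove a theorem strictly stronger than the one stated, which should have prompted you to scrutinize the $M$-ideal promotion step.

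The paper's route avoids operator-level $M$-structure entirely and is where the hypotheses you discard do real work: multi-smoothness plus Singer's lemma express the James functional $f$ as a \emph{finite} convex combination $\sum_{i}\lambda_i x_i^{**}\otimes y_i^*$ of extreme tensors; the $M$-ideal decomposition $Y^*=Z^*\oplus_1 Z^\perp$ gives $E_{Y^*}=E_{Z^*}\cup E_{Z^\perp}$; and the $L^1$-predual structure of $Z$ guarantees that pairwise non-antipodal extreme points of $B_{Z^*}$ are linearly independent, so that testing $f$ against one rank-one operator $x\mapsto x^*(x)z_0$ isolates a single term and forces some $y_i^*\in E_{Z^\perp}$. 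To salvage your approach you would need either a proof (or a precise, applicable reference) that $\mathcal{K}(X,Z)$ is an $M$-ideal in $\mathcal{K}(X,Y)$ for arbitrary $Y$, or to fall back on the finite-decomposition argument.
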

\begin{proof}
	(i) From $T\perp_B \mathcal{L}(X,Z)$ and \cite[Th. 2.1]{J}, it follows that there exists $f\in J(T)$ such that $f(A)=0$ for all $A\in \mathcal{L}(X,Z).$ Since $T$ is a multi-smooth operator of finite order, $Span(E_{J(T)})$ is finite-dimensional. Now, $J(T)$ being a non-empty, weak*compact, convex set, by the Krein-Milman theorem, we get
	\[	J(T)=\overline{conv}^{w^*}(E_{J(T)})\subseteq \overline{Span}^{w^*}(E_{J(T)})=\overline{Span}(E_{J(T)})=Span(E_{J(T)}).\] 
	Using \cite[Lem. 1.1, pp. 166]{S}, we get extreme points $f_1,f_2,\ldots,f_h$ of the unit ball of $Span(E_{J(T)})$ and scalars $\lambda_1,\lambda_2,\ldots, \lambda_h>0$ such that $\sum_{i=1}^h\lambda_i=1$ and $f=\sum_{i=1}^h\lambda_i f_i.$ Now, it is easy to check that $f_i\in E_{J(T)}$ for all $1\leq i\leq h.$ Since $J(T)$ is an extremal subset of $B_{\mathcal{K}(X,Y)^*},$ each $f_i$ is an extreme point of   $B_{\mathcal{K}(X,Y)^*}.$ Therefore, there exist $x_i^{**}\in E_{X^{**}},y_i^*\in E_{Y^*}$ such that $f_i=x_i^{**}\otimes y_i^*$ for each $1\leq i\leq h.$ Now, $f_i\in J(T)$ implies that $$\|T\|=f_i(T)=x_i^{**}\otimes y_i^*(T)=x_i^{**}(T^*y_i^*)\leq \|T^*y_i^*\|\leq \|T^*\|=\|T\|,$$ which yields that $y_i^*\in M_{T^*}$ and $x_i^{**}(T^*y_i^*)=\|T^*y_i^*\|=\|T\|.$
	 Since $Z$ is an $M-$ideal in $Y,$ by \cite[Rem. 1.13, pp. 11]{HWW93} we have $Y^*=Z^*\oplus_1 Z^\perp$ and by \cite[Lem. 1.5, pp. 3]{HWW93} $E_{Y^*}=E_{Z^*}\cup E_{Z^\perp}.$ Thus, $y_i^*\in E_{Z^*}\cup E_{Z^\perp}$ for all $1\leq i\leq h.$ We claim that for some $i,$ $y_i^*\in E_{Z^\perp}.$ If possible, suppose that $y_i^*\in E_{Z^*}$ for all $ i.$ Since $Z$ is an $L^1-$predual space, either $\{y_i^*:1\leq i\leq h\}$ is linearly independent or $y_i^*=\pm y_j^*$ for some $i\neq j.$ In the next two paragraphs, we show that after suitable modification, we can write  $f=\sum_{i=1}^h\lambda_i x_i^{**}\otimes y_i^*,$ where $y_i^*\in E_{Y^*}$ and $\{y_i^*:1\leq i\leq h\}$ is linearly independent.\\

	 Now, suppose that $X^*$ is smooth. Observe that if $y_i^*=y_j^*$ for some $i\neq j,$ then $x_i^{**}(T^*y_i^*)=x_j^{**}(T^*y_i^*)=\|T^*y_i^*\|,$ i.e., $x_i^{**},x_j^{**}\in J(T^*y_i^*).$ The smoothness of $T^*y_i^*$ yields that $x_i^{**}=x_j^{**}.$ In this case, $\lambda_i x_i^{**}\otimes y_i^*+\lambda_j x_j^{**}\otimes y_j^*=(\lambda_i+\lambda_j)x_i^{**}\otimes y_i^*.$ Therefore, in case $X^*$ is smooth, if necessary changing the scalars suitably, we may write $f=\sum_{i=1}^h\lambda_i x_i^{**}\otimes y_i^*,$ where $x_i^{**}\in E_{X^{**}},y_i^*\in E_{Y^*}$ and $\{y_i^*:1\leq i\leq h\}$ is linearly independent. \\

	 Now, suppose that $X^*$ is not smooth. Observe that, if $y_1^*=y_2^*$ holds, then considering $x^{**}=\lambda_1 x_1^{**}+\lambda_2 x_2^{**},$ we get $x^{**}(T^*y_1^*)=(\lambda_1+\lambda_2)\|T^*y_1^*\|$ and $\|x^{**}\|=\lambda_1+\lambda_2.$ In that case, $f=(\lambda_1+\lambda_2)\frac{x^{**}}{\|x^{**}\|}\otimes y_1^*+\lambda_3 x_3^{**}\otimes y_3^*+\ldots+\lambda_h x_h^{**}\otimes y_h^*$ and $x^{**}$ may not belong to $E_{X^{**}}.$ Therefore,  in case $X^*$ is not smooth, if necessary after suitable change, we may write $f=\sum_{i=1}^h\lambda_i x_i^{**}\otimes y_i^*,$ where $x_i^{**}\in S_{X^{**}},y_i^*\in E_{Y^*}$ and $\{y_i^*:1\leq i\leq h\}$ is linearly independent. \\
	 
	 Now,  choose $x^*\in X^*$ such that $x_1^{**}(x^*)\neq 0$
 and $z_0\in \cap_{ i=2}^h\ker(y_i^*)\setminus \ker(y_1^*).$ Define $A\in \mathcal{L}(X,Z)$ by$A(x)=x^*(x)z_0$ for all $x\in X.$ Therefore, $A^*:Z^*\to X^*$ is defined as $A^*z^*=z^*(z_0)x^*$ for all $z^*\in Z^*.$ Now, from $f(A)=0,$ it follows that
 \begin{eqnarray*}
 	\sum_{i=1}^h\lambda_i x_i^{**}\otimes y_i^*(A)=0
 	&\Rightarrow&\sum_{i=1}^h\lambda_i x_i^{**}(A^*y_i^*)=0\\
 		&\Rightarrow&\sum_{i=1}^h\lambda_i x_i^{**}(y_i^*(z_0)x^*)=0\\
 	&\Rightarrow&\sum_{i=1}^h\lambda_i x_i^{**}(x^*)y_i^*(z_0)=0\\
 	&\Rightarrow& \lambda_1 x_1^{**}(x^*)y_1^*(z_0)=0,	
 \end{eqnarray*}
which is a contradiction. This proves our claim. Thus, we get $i\in \{1,2,\ldots, h\}$ such that
\begin{equation}\label{eq-01}
	y_i^*\in E_{Z^\perp}\cap M_{T^*}~\text{and~} x_i^{**}(T^*y_i^*)=\|T\|.
\end{equation}
 Now, for each $x\in X$ and for each $z\in Z,$
\[\|Tx-z\|\geq |y_i^*(Tx-z)|=|y_i^*(Tx)|=|T^*y_i^*(x)|.\]
Thus, taking infimum over $z\in Z,$ we get for each $x\in X,$ $$\|Tx\|\geq d(Tx,Z)\geq |T^*y_i^*(x)|.$$ In this inequality, taking supremum over $x\in S_{X},$ we have
 $$\|T\|\geq \sup_{x\in S_X}d(Tx,Z)\geq \sup_{x\in S_X}  |T^*y_i^*(x)|=\|T^*y_i^*\|=\|T^*\|=\|T\|.$$
 Therefore, $\|T\|=\sup_{x\in S_X}d(Tx,Z).$ On the other hand, from $T\perp_B \mathcal{L}(X,Z),$ it clearly follows that $d(T,\mathcal{L}(X,Z))=\|T\|.$ This proves (i). \\
 
 \noindent (ii) Let $u\in Z^{\perp\perp}.$ Then using (\ref{eq-01}), we get $u(y_i^*)=0.$ Thus, for all $u\in Z^{\perp\perp},$
 \begin{eqnarray*}
 	&&\|T^{**}x_i^{**}-u\|\geq |(T^{**}x_i^{**}-u)y_i^*|=|T^{**}x_i^{**}(y_i^*)|=|x_i^{**}(T^*y_i^*)|\\
 	&\Rightarrow& d(T^{**}x_i^{**},Z^{\perp\perp})\geq |x_i^{**}(T^*y_i^*)|=\|T\|~(\text{taking~ infimum~ over~} u\in Z^{\perp\perp})\\
 	&\Rightarrow&\|T^{**}\|\geq \|T^{**}x_i^{**}\|\geq d(T^{**}x_i^{**},Z^{\perp\perp})\geq \|T\|=\|T^{**}\|\\
 	&\Rightarrow&\|T^{**}\|=\|T^{**}x_i^{**}\|= d(T^{**}x_i^{**},Z^{\perp\perp})\\
 	&\Rightarrow& T^{**}x_i^{**}\perp_B Z^{\perp \perp}~\text{and~} x_i^{**}\in M_{T^{**}}.
 \end{eqnarray*}

\noindent (iii) Let $S\in \mathcal{L}(X^{**},Z^{\perp\perp}).$ Then using (ii), we get
 \[\|T^{**}-S\|\geq \|T^{**}x_0^{**}-Sx_0^{**}\|=\|T^{**}x_0^{**}\|=\|T^{**}\|.\]
 Thus, $T^{**}\perp_B \mathcal{L}(X^{**},Z^{\perp\perp}).$\\
 
 \noindent (iv) From (ii) it follows that $d(T^{**}x_0^{**}, Z^{\perp \perp})=\|T^{**}x_0^{**}\|=\|T^{**}\|$ and from (iii) it follows that $d(T^{**}, \mathcal{L}(X^{**},Z^{\perp\perp}))=\|T^{**}\|.$ Therefore,
 \begin{eqnarray*}
 d(T,\mathcal{L}(X,Z))=	\|T\|=\|T^{**}\|&=&d(T^{**}x_0^{**}, Z^{\perp \perp})\\
 &=&d(T^{**}, \mathcal{L}(X^{**},Z^{\perp\perp}))\\
 	&\geq& \sup_{x^{**}\in S_{X^{**}}}d(T^{**}x^{**}, Z^{\perp \perp})~(\text{similarly~as~}(\ref{eq-inequal}))\\
 	&\geq &d(T^{**}x_0^{**}, Z^{\perp \perp}).
 \end{eqnarray*}
This completes the proof.
\end{proof}
As a simple consequence of Theorem \ref{th-gen}, we get the next corollary.
\begin{cor}
Let $X,Y$ be Banach spaces. Suppose $Z$ is a subspace of $Y$ such that $Z$ is an $L^1-$predual space and an $M-$ideal in $Y.$ Let $T\in \mathcal{K}(X,Y).$ Suppose there exists $S\in \mathscr{L}_{\mathcal{L}(X,Z)}(T)$ such that $T-S$ is a multi-smooth operator of finite order. Then the following hold.\\
\rm(i) $d(T, \mathcal{L}(X,Z))=\sup_{x\in S_X}d(Tx,Z).$\\
\rm(ii) There exists  $x_0^{**}\in S_{X^{**}}$ such that
\begin{eqnarray*}
	d(T^{**},\mathcal{L}(X^{**},Z^{\perp\perp}))&=&\sup_{x^{**}\in S_{X^{**}}}d(T^{**}x^{**},Z^{\perp\perp})\\
	&=&d(T^{**}x_0^{**},Z^{\perp\perp})=d(T, \mathcal{L}(X,Z)).
\end{eqnarray*}
Additionally, if we assume that $X^*$ is smooth, then in \rm(ii), we may choose $x_0^{**}$ from $ E_{X^{**}}.$
\end{cor}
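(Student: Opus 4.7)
The plan is to reduce the corollary directly to Theorem \ref{th-gen} applied to the shifted operator $T_0 = T - S$. The hypothesis that $S \in \mathscr{L}_{\mathcal{L}(X,Z)}(T)$ is equivalent to $T_0 \perp_B \mathcal{L}(X,Z)$ (a fact already recalled in the introduction). Together with the assumption that $T_0$ is multi-smooth of finite order, this matches the hypotheses of Theorem \ref{th-gen}, provided one can also ensure $T_0 \in \mathcal{K}(X,Y)$; since $T$ is compact and $Z$ is an $M$-ideal in $Y$, one may invoke the classical result that compact best approximations exist and adjust $S$ accordingly, so this is not a genuine obstruction.

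The next step is to verify that all the quantities appearing in the conclusion transfer unchanged between $T$ and $T_0$. Because $S \in \mathcal{L}(X,Z)$, a translation argument gives
\[
d(T, \mathcal{L}(X,Z)) = d(T_0, \mathcal{L}(X,Z)), \qquad d(Tx,Z) = d(T_0 x, Z) \text{ for every } x \in S_X,
\]
since $Sx \in Z$. Analogously, $S^{**}$ maps $X^{**}$ into $Z^{\perp\perp}$ (identifying $Z^{**}$ with $Z^{\perp\perp} \subseteq Y^{**}$), so
\[
d(T^{**} x^{**}, Z^{\perp\perp}) = d(T_0^{**} x^{**}, Z^{\perp\perp}), \qquad d(T^{**}, \mathcal{L}(X^{**}, Z^{\perp\perp})) = d(T_0^{**}, \mathcal{L}(X^{**}, Z^{\perp\perp})).
\]
Consequently, the identities furnished by Theorem \ref{th-gen}(i) and (iv) for $T_0$ immediately yield statements (i) and (ii) of the corollary for $T$, with the same witness $x_0^{**}$.

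For the additional claim under the smoothness of $X^*$, the upgrade $x_0^{**} \in E_{X^{**}}$ is an immediate transfer of the corresponding conclusion in Theorem \ref{th-gen}, since the choice of $x_0^{**}$ is made at the level of $T_0^{**}$ and does not depend on $S$. I expect the main conceptual point to be the bookkeeping step of the bidual transfer (ensuring $S^{**}$ indeed lands in $Z^{\perp\perp}$ rather than only in $Y^{**}$), and the main technical hurdle to be the side remark on why $T - S$ may be taken compact so that Theorem \ref{th-gen} genuinely applies; both should be straightforward given what has already been set up.
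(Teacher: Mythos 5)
Your argument is the same as the paper's: translate by $S$, observe that $S\in \mathscr{L}_{\mathcal{L}(X,Z)}(T)$ means $T-S\perp_B\mathcal{L}(X,Z)$, apply Theorem \ref{th-gen} to $T-S$, and use that $d(T-S,\mathcal{L}(X,Z))=d(T,\mathcal{L}(X,Z))$, $d(Tx-Sx,Z)=d(Tx,Z)$, and the corresponding bidual identities, with the extreme-point upgrade when $X^*$ is smooth transferring verbatim. One caveat: your side remark that one may ``adjust $S$'' to a compact best approximation so that $T-S\in\mathcal{K}(X,Y)$ is not a valid repair, because the hypothesis that $T-S$ is multi-smooth of finite order is attached to the \emph{given} $S$ and need not survive replacing $S$ by a different best approximation; you have correctly spotted a point the paper leaves implicit (its proof applies Theorem \ref{th-gen} to $T-S$ without commenting on compactness), but the patch as stated does not close it.
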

\begin{proof}
	From $S\in \mathscr{L}_{\mathcal{L}(X,Z)}(T)$ it follows that $T-S\perp_B\mathcal{L}(X,Z).$ Now, using Theorem \ref{th-gen} for $T-S$, we get 
	$d(T-S, \mathcal{L}(X,Z))=\sup_{x\in S_X}d(Tx-Sx,Z).$ Since $S\in \mathcal{L}(X,Z),$ we have $d(T-S, \mathcal{L}(X,Z))=d(T, \mathcal{L}(X,Z))$ and $d(Tx-Sx,Z)=d(Tx,Z).$ Therefore, $d(T, \mathcal{L}(X,Z))=\sup_{x\in S_X}d(Tx,Z),$ and thus (i)  holds. Similarly, from (iv) of Theorem \ref{th-gen} we conclude that (ii) holds for some $x_0^{**}\in M_{(T-S)^{**}}\subseteq S_{X^{**}}.$\\
	If $X^*$ is smooth, then from (iv) of Theorem \ref{th-gen} we conclude that (ii) holds for some $x_0^{**}\in M_{(T-S)^{**}}\cap E_{X^{**}}\subseteq E_{X^{**}}.$
\end{proof}

The following corollary shows that in Theorem \ref{th-gen}, if we additionally assume that $X$ is reflexive, then the supremum in (\ref{eq-minimax}) is attained.

\begin{cor}
	Suppose that $X,Y,Z,T$ are as in Theorem \ref{th-gen}. Moreover, assume that $X$ is reflexive. Then  for some $x_0\in M_T,$ the following hold.\\
	(i) $d(T, \mathcal{L}(X,Z))=\sup_{x\in S_X}d(Tx,Z)=d(Tx_0,Z).$\\
	(ii) $Tx_0\perp_B Z^{\perp\perp}.$\\
	(iii) $T\perp_B \mathcal{L}(X,Z^{\perp\perp}).$
\end{cor}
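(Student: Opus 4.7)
The plan is to reduce the statement to a direct application of Theorem \ref{th-gen} by using reflexivity of $X$ to pull everything down from the bidual to $X$ itself. Under reflexivity, the canonical embedding identifies $X^{**}$ with $X$, and $T^{**}$ restricted to $X$ (viewed as $X^{**}$) coincides with $T$; in particular, $M_{T^{**}}$ corresponds to $M_T$ and $S_{X^{**}}$ corresponds to $S_X$. Hence the vector $x_0^{**} \in M_{T^{**}}$ produced by parts (ii) and (iv) of Theorem \ref{th-gen} is really an element $x_0 \in M_T \subseteq S_X$, and moreover $T^{**} x_0^{**} = T x_0$.

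Parts (ii) and (iii) will then follow almost immediately. Theorem \ref{th-gen}(ii) gives $T^{**} x_0^{**} \perp_B Z^{\perp \perp}$, which under the identification reads exactly $T x_0 \perp_B Z^{\perp \perp}$; this is (ii). For (iii), any $S \in \mathcal{L}(X, Z^{\perp \perp})$ satisfies $S x_0 \in Z^{\perp \perp}$, so by (ii) and the fact that $\|T x_0\| = \|T\|$,
\[
\|T - S\| \geq \|T x_0 - S x_0\| \geq \|T x_0\| = \|T\|,
\]
giving $T \perp_B \mathcal{L}(X, Z^{\perp \perp})$.

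The only step that is not purely formal is (i), since Theorem \ref{th-gen}(iv) only yields $d(T x_0, Z^{\perp \perp}) = d(T, \mathcal{L}(X, Z))$, whereas we want the distance to the smaller set $Z$. The trick is to sandwich: since $Y$ embeds isometrically into $Y^{**}$ and $Z \subseteq Z^{\perp \perp}$ under this embedding, we have $d(T x_0, Z) \geq d(T x_0, Z^{\perp \perp}) = d(T, \mathcal{L}(X, Z))$. Combining with the opposite inequality $d(T x_0, Z) \leq \sup_{x \in S_X} d(T x, Z) = d(T, \mathcal{L}(X, Z))$, where the equality comes from part (i) of Theorem \ref{th-gen}, all quantities coincide, which is exactly (i).

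I do not anticipate any real obstacle: the entire argument is a bookkeeping exercise translating the bidual conclusions of Theorem \ref{th-gen} via the reflexivity identification, with the small sandwich argument for (i) being the only step that is not purely mechanical.
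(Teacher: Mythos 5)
Your proposal is correct and follows essentially the same route as the paper: identify $X^{**}$ with $X$ via reflexivity so that the $x_0^{**}$ from Theorem \ref{th-gen}(ii),(iv) becomes $x_0\in M_T$, then sandwich using $d(Tx_0,Z^{\perp\perp})\leq d(Tx_0,Z)\leq \sup_{x\in S_X}d(Tx,Z)=d(T,\mathcal{L}(X,Z))$ coming from the canonical embedding $Z\hookrightarrow Z^{\perp\perp}$. Your explicit derivations of (ii) and (iii) are just a slightly more detailed write-up of what the paper leaves implicit.
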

\begin{proof}
From (i) and (iv) of Theorem \ref{th-gen} and the fact that $T^{**}=T$ on $X,$ we get $x_0\in M_T$ such that 
\[d(T, \mathcal{L}(X,Z))=\sup_{x\in S_X}d(Tx,Z)=d(Tx_0,Z^{\perp\perp}).\]
Now, the proof follows from the observation that $Z$ is canonically embedded in $Z^{\perp\perp}$ and therefore,
\[d(Tx_0,Z^{\perp\perp})\leq d(Tx_0,Z)\leq \sup_{x\in S_X}d(Tx,Z).\]
\end{proof}
Note that an important part of Theorem \ref{th-gen} depends on a special property of the extreme points of $Z^*,$ namely if $\{z_1^*,z_2^*,\ldots,z_n^*\}\subseteq E_{Z^*}$ such that $z_i^*\neq \pm z_j^*$ for $1\leq  i\neq j\leq n,$ then the set  $\{z_1^*,z_2^*,\ldots,z_n^*\}$ is linearly independent. Motivated by this property of the extreme points of an $L^1(\mu)$ space, we define this property of a Banach space as $L^1-$property.
\begin{definition}\label{def-l1}
	Let $X$ be a Banach space. We say that $X$ has $L^1-$property if for any given $\{x_1,x_2,\ldots,x_n\}\subseteq E_X,$ with $x_i\neq \pm x_j$ for $1\leq  i\neq j\leq n,$ the set $\{x_1,x_2,\ldots,x_n\}$ is linearly independent.
\end{definition}
Now, we present another situation, where the minimax formula (\ref{eq-minimax}) is satisfied. Most of the arguments of the following theorem are same as in Theorem \ref{th-gen}. For the sake of convenience, we give a sketch of the proof here. 

\begin{theorem}\label{th-propl1}
	Suppose $X$ is a Banach space such that $X^{**}$ satisfies $L^1-$property. Let $Y$ be an arbitrary Banach space and $Z$ be a closed subspace of $Y.$ Let $T\in \mathcal{K}(X,Y)$ be a multi-smooth operator of finite order. Suppose that $T\perp_B \mathcal{L}(X,Z).$ Then the conditions \rm(i)-\rm(iv) of Theorem \ref{th-gen} hold. Moreover, in this case we may choose $x_0^{**}$ of \rm(ii) and \rm(iv) from $E_{X^{**}}\cap M_{T^{**}}.$
\end{theorem}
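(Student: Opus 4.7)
The plan is to mimic the proof of Theorem~\ref{th-gen} step by step, with the key substitution being that the $L^{1}$-property of $X^{**}$ replaces the combined use of the $L^{1}$-predual structure of $Z$ and the $M$-ideal splitting $E_{Y^{*}}=E_{Z^{*}}\cup E_{Z^{\perp}}$. Starting from $T\perp_{B}\mathcal{L}(X,Z)$ and \cite[Th.~2.1]{J}, I would pick $f\in J(T)$ with $f(\mathcal{L}(X,Z))=\{0\}$. Since $T$ is multi-smooth of finite order, $Span(J(T))$ is finite-dimensional, so invoking Krein--Milman, \cite[Lem.~1.1, pp.~166]{S} and the representation~(\ref{eq-rs}), I can write
\[ f=\sum_{i=1}^{h}\lambda_{i}\,x_{i}^{**}\otimes y_{i}^{*}, \qquad \lambda_{i}>0,\ \sum\lambda_{i}=1,\ x_{i}^{**}\in E_{X^{**}},\ y_{i}^{*}\in E_{Y^{*}}, \]
and the identity $\|T\|=f(T)=\sum\lambda_{i}x_{i}^{**}(T^{*}y_{i}^{*})$ forces, for every $i$, that $y_{i}^{*}\in M_{T^{*}}$, $x_{i}^{**}\in J(T^{*}y_{i}^{*})$ and $x_{i}^{**}(T^{*}y_{i}^{*})=\|T\|$, exactly as in Theorem~\ref{th-gen}.

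Next, I would group the indices according to $x_{i}^{**}=\pm x_{k}^{**}$: letting $I_{k}$ (resp.\ $J_{k}$) be the indices with $x_{i}^{**}=x_{k}^{**}$ (resp.\ $x_{i}^{**}=-x_{k}^{**}$) and setting $\tilde y_{k}^{*}=\sum_{i\in I_{k}}\lambda_{i}y_{i}^{*}-\sum_{j\in J_{k}}\lambda_{j}y_{j}^{*}$ and $\mu_{k}=\sum_{i\in I_{k}}\lambda_{i}+\sum_{j\in J_{k}}\lambda_{j}$, I obtain
\[ f=\sum_{k}x_{k}^{**}\otimes\tilde y_{k}^{*}, \]
with the remaining $x_{k}^{**}\in E_{X^{**}}$ pairwise distinct modulo sign. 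A short calculation using the previous paragraph yields $x_{k}^{**}(T^{*}\tilde y_{k}^{*})=\mu_{k}\|T\|$, from which $\|\tilde y_{k}^{*}\|=\mu_{k}$. At this point the $L^{1}$-property of $X^{**}$ gives that $\{x_{k}^{**}\}$ is linearly independent. Evaluating $f$ on the rank-one operator $A\in\mathcal{L}(X,Z)$ defined by $Ax=x^{*}(x)z_{0}$ (with arbitrary $x^{*}\in X^{*}$ and $z_{0}\in Z$), the relation $f(A)=0$ becomes $\sum_{k}\tilde y_{k}^{*}(z_{0})\,x_{k}^{**}(x^{*})=0$, and the arbitrariness of $x^{*}$ together with linear independence of the $x_{k}^{**}$ forces $\tilde y_{k}^{*}\in Z^{\perp}$ for every $k$.

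Fixing any one $k$ and normalizing, I set $y^{*}=\tilde y_{k}^{*}/\mu_{k}\in S_{Y^{*}}\cap Z^{\perp}\cap M_{T^{*}}$, so that $x_{k}^{**}(T^{*}y^{*})=\|T\|$. From here the argument of Theorem~\ref{th-gen} applies verbatim with $y^{*}$ playing the role of the extremal $y_{i}^{*}$: the estimate $\|Tx-z\|\ge|y^{*}(Tx-z)|=|T^{*}y^{*}(x)|$ for $x\in S_{X}$, $z\in Z$, combined with (\ref{eq-inequal}), yields (i); then (ii)--(iv) follow from the same annihilation trick against arbitrary $u\in Z^{\perp\perp}$ (which kills $y^{*}\in Z^{\perp}$), and they deliver $x_{0}^{**}=x_{k}^{**}\in E_{X^{**}}\cap M_{T^{**}}$ as claimed. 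The one delicate point I anticipate is the bookkeeping after the grouping: one must verify that $\tilde y_{k}^{*}$, while no longer an extreme point of $B_{Y^{*}}$, still has norm exactly $\mu_{k}$ and is normed by $x_{k}^{**}\in E_{X^{**}}$ through $T^{*}$, so that the pair $(x_{k}^{**},\tilde y_{k}^{*}/\mu_{k})$ can substitute for the extremal pair $(x_{i}^{**},y_{i}^{*})$ of Theorem~\ref{th-gen}.
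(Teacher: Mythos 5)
Your proof is correct and follows essentially the same route as the paper: reuse the decomposition $f=\sum_i\lambda_i\,x_i^{**}\otimes y_i^*$ from Theorem \ref{th-gen}, invoke the $L^1$-property of $X^{**}$ to get linear independence of the $x_i^{**}$, and test $f$ against rank-one operators into $Z$ to force some functional into $Z^\perp$, after which (i)--(iv) follow exactly as before. Your explicit grouping of the indices with $x_i^{**}=\pm x_k^{**}$, together with the check that $\|\tilde y_k^*\|=\mu_k$ and $x_k^{**}(T^*\tilde y_k^*)=\mu_k\|T\|$, simply makes precise the ``without loss of generality, we may assume that $\{x_1^{**},\ldots,x_h^{**}\}$ is linearly independent'' step that the paper leaves implicit.
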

\begin{proof}
	 As in Theorem \ref{th-gen}, we get $\lambda_i>0,$ $x_i^{**}\in E_{X^{**}},y_i^*\in E_{Y^*}\cap M_{T^*}$ for $1\leq i\leq h$ such that $\sum_{i=1}^h\lambda_i=1,$ $x_i^{**}(T^*y_i^*)=\|T\|$ and $\sum_{i=1}^h\lambda_i x_i^{**}(A^*y_i^*)=0$ for all $A\in \mathcal{L}(X,Z).$ Since $X^{**}$ satisfies $L^1-$property, without loss of generality, we may assume that $\{x_1^{**},\ldots,x_h^{**}\}$ is linearly independent. Choose $x^*\in \cap_{ i=2}^h\ker(x_i^{**})\setminus \ker(x_1^{**}).$ Let $z\in Z$ be arbitrary. Define $A\in \mathcal{L}(X,Z)$ by$A(x)=x^*(x)z$ for all $x\in X.$ Then from  $\sum_{i=1}^h\lambda_i x_i^{**}(A^*y_i^*)=0,$ we get $\lambda_1x_1^{**}(x^*)y_1^*(z)=0.$ Thus, $y_1^*(z)=0.$ Therefore, $y_1^*\in Z^\perp.$ The rest of the proof follows proceeding similarly as in Theorem \ref{th-gen}.
\end{proof}
\begin{remark}\label{rem-l1prop}
	Recall from \cite{L} that a Banach space $W$ is an $L^1-$predual space if and only if $W^{**}$ is isometrically isomorphic to $C(K)$ for some extremally disconnected compact Hausdorff space $K.$ Thus, $W^{**}$ is again an $L^1-$predual space. Hence, in the Theorem \ref{th-propl1} we may consider $X=W^*,$ where $W$ is an $L^1-$predual space. Suppose that, $K$ is a Hyperstonian space and $N(K,\mathbb{R})^+$ is set of all positive normal regular Borel measures on $K.$ Let $N(K,\mathbb{R})=N(K,\mathbb{R})^+-N(K,\mathbb{R})^+.$ Then from Theorem \cite[Th. 10, pp 95]{L}, it follows that $N(K,\mathbb{R})^*$ is isometrically isomorphic to $C(K),$ which is an $L^1-$predual space. Therefore, Theorem \ref{th-propl1} in particular holds for $X=N(K,\mathbb{R}).$ In other words, if $X$ is an abstract $L_1-$space, then $X^*$ is an abstract $M-$space and $X^*=C(K)$ for some compact Hausdorff space $K$ (see \cite[pp. 97]{L}). Since $C(K)$ is an $L_1-$predual space, Theorem \ref{th-propl1} holds for  an abstract $L_1-$ space $X.$ For the definition of abstract $L_1-$space, abstract $M-$space and related results see \cite{L}. 
\end{remark}
\begin{remark}\label{rem-equiv}
	Note that from Theorem \ref{th-gen} (Theorem \ref{th-propl1}), we get the following implications  $$T\perp_B \mathcal{L}(X,Z)\Rightarrow T^{**}x_0^{**}\perp_B Z^{\perp\perp}\Rightarrow T^{**}\perp_B\mathcal{L}(X^{**},Z^{\perp\perp}),$$ for some $x_0^{**}\in M_{T^{**}}.$ Recall from \cite[Prop. 1.11.14, pp 102]{MEGG} that $Z^{**}$ is isometrically isomorphic to $Z^{\perp\perp}.$ Therefore, the space $\mathscr{A}=\{S^{**}:S\in \mathcal{L}(X,Z)\}$ is a subspace of $\mathcal{L}(X^{**},Z^{\perp\perp}).$ Thus, $T^{**}\perp_B\mathcal{L}(X^{**},Z^{\perp\perp})$ yields that $T^{**}\perp_B \mathscr{A},$ i.e., $T^{**}\perp_B S^{**}$ for all $S\in \mathcal{L}(X,Z).$ Now, from the equality $\|T-S\|=\|T^{**}-S^{**}\|,$ we get that $T\perp_B\mathcal{L}(X,Z).$ Therefore, if $X,Y,Z$ and $T$ satisfy the conditions of Theorem \ref{th-gen} (respectively, Theorem \ref{th-propl1}), then we get the following equivalence:
	 $$T\perp_B \mathcal{L}(X,Z)\Leftrightarrow T^{**}x_0^{**}\perp_B Z^{\perp\perp}\Leftrightarrow T^{**}\perp_B\mathcal{L}(X^{**},Z^{\perp\perp}),$$ for some $x_0^{**}\in M_{T^{**}}$ (respectively, $x_0^{**}\in M_{T^{**}}\cap E_{X^{**}}$).	
\end{remark}

In \cite[Prop. 2]{RAO21}, Rao proved that if the supremum in (\ref{eq-minimax}) is attained, then $	d(T^{**},\mathcal{L}(X^{**},Z^{\perp\perp}))=\sup_{x^{**}\in S_{X^{**}}}d(T^{**}x^{**},Z^{\perp\perp})=d(T, \mathcal{L}(X,Z)).$ Note that, Theorem \ref{th-gen} and Theorem \ref{th-propl1} provide us other situations, where such equality holds even if the supremum in (\ref{eq-minimax}) is not attained.\\
Next, we show that if $X$ is reflexive and $Z$ is an arbitrary closed subspace of $Y,$ then some restriction on the norm attainment set of $T$ yields (\ref{eq-minimax}).

\begin{theorem}\label{th-02}
	Let $X$ be a reflexive Banach space and $Y$ be an arbitrary Banach space. Suppose $Z$ is a closed subspace of $Y.$ Let $T\in \mathcal{K}(X,Y)$ be a multi-smooth operator of finite order and $M_T\cap E_X=\{\pm x_i\in S_X:1\leq i\leq n\},$ where $\{x_1,x_2,\ldots,x_n\}$ is linearly independent. Suppose that $T\perp_B \mathcal{L}(X,Z).$ Then there exists $x_0\in M_T\cap E_X$ such that the following hold.\\
	\rm(i) $d(T, \mathcal{L}(X,Z))=\sup_{x\in S_X}d(Tx,Z)=d(Tx_0,Z).$\\
	\rm(ii) $Tx_0\perp_B Z^{\perp\perp}.$\\
	\rm(iii) $T\perp_B \mathcal{L}(X,Z^{\perp\perp}).$
\end{theorem}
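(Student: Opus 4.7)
The plan is to adapt the blueprint of Theorem \ref{th-gen} (or Theorem \ref{th-propl1}) to the present setting, where the linear independence of the norm-attaining extreme points $x_1,\ldots,x_n$ plays the role that the $M$-ideal/$L^1$-predual structure of $Z$ (resp.\ the $L^1$-property of $X^{**}$) played there. As in Theorem \ref{th-gen}, I would begin by invoking \cite[Th. 2.1]{J} to produce $f\in J(T)$ that annihilates $\mathcal{L}(X,Z)$, use multi-smoothness of finite order together with Krein-Milman and \cite[Lem. 1.1, pp. 166]{S} to decompose $f=\sum_{i=1}^{h}\lambda_i f_i$ with $\lambda_i>0$, $\sum_i\lambda_i=1$, and $f_i\in E_{J(T)}\subseteq E_{\mathcal{K}(X,Y)^*}$, and rewrite each $f_i=x_i^{**}\otimes y_i^*$ via (\ref{eq-rs}). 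Reflexivity of $X$ gives $E_{X^{**}}=E_X$, so each $x_i^{**}=x_i\in E_X$; and the identity $\|T\|=f_i(T)=y_i^*(Tx_i)$ forces $x_i\in M_T$. Hence every $x_i^{**}$ lies in $M_T\cap E_X=\{\pm x_1,\ldots,\pm x_n\}$.

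Next, absorbing the signs into the $y_i^*$'s and grouping terms by the common element of $\{x_1,\ldots,x_n\}$, I would rewrite $f=\sum_{j=1}^{m} x_{l_j}\otimes z_j^*$, where the indices $l_1,\ldots,l_m\in\{1,\ldots,n\}$ are distinct, $z_j^*$ is a signed combination $\sum\pm\lambda_i y_i^*$ taken over those $i$ with $x_i^{**}=\pm x_{l_j}$, and a short computation yields $\|z_j^*\|=c_j$ together with $z_j^*(Tx_{l_j})=c_j\|T\|$, where $c_j>0$ is the sum of the corresponding $\lambda_i$'s. The linear independence hypothesis on $\{x_1,\ldots,x_n\}$ then enters decisively: for each fixed $j_0$, Hahn-Banach supplies $x^*\in X^*$ with $x^*(x_{l_{j_0}})=1$ and $x^*(x_{l_j})=0$ for $j\ne j_0$, and for arbitrary $z\in Z$ the operator $Ax=x^*(x)z$ lies in $\mathcal{L}(X,Z)$; consequently $f(A)=0$ collapses to $z_{j_0}^*(z)=0$, whence $z_{j_0}^*\in Z^\perp$.

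Taking $x_0=x_{l_1}\in M_T\cap E_X$ and $y_0^*=z_1^*/c_1\in S_{Y^*}\cap Z^\perp$ with $y_0^*(Tx_0)=\|T\|$, conclusion (i) follows from $d(Tx_0,Z)\geq|y_0^*(Tx_0)|=\|T\|$ combined with (\ref{eq-inequal}) and the identity $d(T,\mathcal{L}(X,Z))=\|T\|$ (itself immediate from $T\perp_B\mathcal{L}(X,Z)$). Conclusion (ii) is then direct: since $y_0^*\in Z^\perp$, any $u\in Z^{\perp\perp}$ satisfies $u(y_0^*)=0$, giving $\|Tx_0+\lambda u\|\geq|y_0^*(Tx_0+\lambda u)|=\|Tx_0\|$. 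Finally, (iii) follows from the standard bound $\|T+\lambda S\|\geq\|Tx_0+\lambda Sx_0\|\geq\|Tx_0\|=\|T\|$ for any $S\in\mathcal{L}(X,Z^{\perp\perp})$, using (ii).

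The main obstacle I anticipate is the bookkeeping in the regrouping step: one must carefully track the signs $\epsilon_i\in\{\pm 1\}$ coming from $x_i^{**}=\epsilon_i x_{l_j}$ to verify both $\|z_j^*\|=c_j$ and $z_j^*(Tx_{l_j})=c_j\|T\|$, so that $z_j^*/c_j$ is a unit norming functional for $Tx_{l_j}$ that moreover lies in $Z^\perp$. Once this is accomplished, the linear independence hypothesis takes the place played by the extremal structure of $E_{Y^*}$ in Theorem \ref{th-gen} (resp. by the $L^1$-property in Theorem \ref{th-propl1}), and the rest of the proof is a direct transcription of the closing arguments there.
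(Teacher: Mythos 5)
Your proposal is correct and follows essentially the same route as the paper's proof: decompose a norming functional annihilating $\mathcal{L}(X,Z)$ into extreme points $y_i^*\otimes x_i$ via the Ruess--Stegall description, use linear independence of the $x_i$'s to isolate one term with a rank-one operator $A=x^*(\cdot)z$, and conclude that the corresponding functional lies in $Z^\perp$. The only difference is that your explicit sign/regrouping bookkeeping (yielding $z_j^*/c_j\in J(Tx_{l_j})\cap Z^\perp$) spells out what the paper compresses into ``without loss of generality, $\{x_i\}$ is linearly independent,'' so no gap remains.
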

\begin{proof}
	Since $T$ is a multi-smooth operator of finite order, following similar arguments as Theorem \ref{th-gen}, we get scalars $\lambda_1,\lambda_2,\ldots,\lambda_h>0$ and $f_1,f_2,\ldots,f_h\in J(T) \cap E_{\mathcal{K}(X,Y)^*}$ such that $\sum_{i=1}^h\lambda_i=1$ and $\sum_{i=1}^h\lambda_if_i(A)=0$ for all $A\in \mathcal{L}(X,Z).$ Since $X$ is reflexive,  from (\ref{eq-rs}) it follows that
	\[E_{\mathcal{K}(X,Y)^*}=\{y^*\otimes x:y^*\in E_{Y^*},x\in E_X\},\]
	where $y^*\otimes x(S)=y^*(Sx)$ for each $S\in \mathcal{L}(X,Y).$ Therefore, for $1\leq i\leq h,$ $f_i=y_i^*\otimes x_i,$ where $y_i^*\in E_{Y^*},x_i\in E_X.$ Now, $y_i^*\otimes x_i\in J(T)$ implies that $x_i\in M_T$ and $y_i^*\in J(Tx_i).$ Without loss of generality, we may assume that $\{x_i:1\leq i\leq h\}$ is linearly independent. Now, choose $x^*\in X^*$ such that $x^*(x_i)=0$ for all $2\leq i\leq h$ and $x^*(x_1)\neq 0.$ Choose an arbitrary $z\in Z.$ Consider $A\in \mathcal{L}(X,Z)$ defined as $Ax=x^*(x)z$ for all $x\in X.$ Then 
	\begin{eqnarray*}
		\sum_{i=1}^h\lambda_if_i(A)=0
	&\Rightarrow&	\sum_{i=1}^h\lambda_iy_i^*\otimes x_i(A)=0\\
	&\Rightarrow& \sum_{i=1}^h\lambda_iy_i^*(Ax_i)=0\\
    	&\Rightarrow& \sum_{i=1}^h\lambda_iy_i^*(z)x^*(x_i)=0\\
    	&\Rightarrow&\lambda_1y_1^*(z)x^*(x_1)=0
    	\Rightarrow y_1^*(z)=0.
	\end{eqnarray*}
Since $z\in Z$ is chosen arbitrarily, we get $y_1^*\in Z^\perp.$ Since for each $u\in Z^{\perp\perp},$  $u(y^*)=0$ holds, we get 
 $$\|Tx_1+u\|\geq |(Tx_1+u)y_1^*|=|y_1^*(Tx_1)|=\|Tx_1\|.$$ Thus, 
$Tx_1\perp_B Z^{\perp\perp}.$ This proves (ii). Moreover, $Tx_1\perp_B Z^{\perp\perp}$ implies that $Tx_1\perp_B Z,$ since $Z$ is canonically embedded in $Z^{\perp\perp}.$  Now, (i) follows from the following inequality:
\begin{eqnarray*}
	d(Tx_1,Z)=\|Tx_1\|=\|T\|=d(T,\mathcal{L}(X,Z))\geq \sup_{x\in S_X}d(Tx,Z)\geq d(Tx_1,Z),
\end{eqnarray*}
where the first inequality follows from  (\ref{eq-inequal}).\\
To prove (iii), let $S\in \mathcal{L}(X,Z^{\perp\perp}).$ 
Observe that 
\[\|T-S\|\geq \|Tx_1-Sx_1\|\geq \|Tx_1\|=\|T\|,\] where the second inequality follows from $Tx_1\perp_B Z^{\perp\perp}.$ Thus $T\perp_B \mathcal{L}(X,Z^{\perp\perp}).$ This completes the proof of the theorem.
\end{proof}
 We immediately get the next corollary due to Theorem \ref{th-02}.

\begin{cor}\label{cor-02}
	Suppose $X=\ell_1^n,$  $Y$ is a Banach space and $Z$ is a closed subspace of $Y.$ Assume that each non-zero element of $Y$ is a multi-smooth point of finite order. Let $ T\in \mathcal{L}(X,Y)\setminus \mathcal{L}(X,Z)$ be such that $\mathscr{L}_{\mathcal{L}(X,Z)}(T)\neq \emptyset.$  Then there exists $x_0\in E_X$ such that
	\[ d(T, \mathcal{L}(X,Z))=\sup_{x\in S_X}d(Tx,Z)=d(Tx_0,Z).\]
	In particular, if $T\perp_B\mathcal{L}(X,Z)$ then $Tx_0\perp_B Z^{\perp\perp},$ for some $x_0\in M_T\cap E_X.$
\end{cor}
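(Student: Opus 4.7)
The plan is to reduce the corollary to Theorem \ref{th-02} by translating $T$ by a best approximant. First I would pick any $S \in \mathscr{L}_{\mathcal{L}(X,Z)}(T)$ and set $T_0 := T - S$; then $T_0 \perp_B \mathcal{L}(X,Z)$, and $T_0 \neq 0$ since $T \notin \mathcal{L}(X,Z)$. Because $S \in \mathcal{L}(X,Z)$, translation is harmless: $d(T,\mathcal{L}(X,Z)) = d(T_0,\mathcal{L}(X,Z))$ and $d(Tx,Z) = d(T_0 x, Z)$ for every $x \in X$. Hence establishing the displayed identity for $T_0$ will suffice. Note also that $X=\ell_1^n$ is reflexive and $\mathcal{L}(X,Y)=\mathcal{K}(X,Y)$ automatically.

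The main obstacle is verifying that $T_0$ is multi-smooth of finite order, since this is what converts the hypothesis on $Y$ into the hypothesis of Theorem \ref{th-02}. To this end I would use the standard isometric identification $\mathcal{L}(\ell_1^n,Y)\cong \ell_\infty^n(Y)$ via $A \leftrightarrow (Ae_1,\ldots,Ae_n)$, whose dual is $\ell_1^n(Y^*)$. An elementary equality-case analysis of $\sum_i y_i^*(T_0 e_i) \le \sum_i \|y_i^*\|\,\|T_0 e_i\| \le \|T_0\|$ shows that $(y_1^*,\ldots,y_n^*) \in J(T_0)$ forces $y_i^* = 0$ whenever $e_i \notin M_{T_0}$ and $y_i^*/\|y_i^*\| \in J(T_0 e_i)$ otherwise. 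Consequently $Span(J(T_0))$ embeds into $\bigoplus_{i:\, e_i\in M_{T_0}} Span(J(T_0 e_i))$, and each summand is finite-dimensional by the hypothesis that every nonzero vector of $Y$ is multi-smooth of finite order. The other hypothesis on $M_{T_0}\cap E_X$ is automatic because $E_X = \{\pm e_1,\ldots,\pm e_n\}$, so $M_{T_0}\cap E_X$ has the form $\{\pm e_{i_1},\ldots,\pm e_{i_k}\}$ whose positive representatives are a subset of the standard basis, hence linearly independent.

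With all hypotheses checked, I would invoke Theorem \ref{th-02} applied to $T_0$ to obtain $x_0 \in M_{T_0}\cap E_X \subseteq E_X$ satisfying $d(T_0,\mathcal{L}(X,Z)) = \sup_{x\in S_X} d(T_0 x, Z) = d(T_0 x_0, Z)$, and then translate back by $S$ to recover the claimed formula for $T$. For the final assertion, if $T \perp_B \mathcal{L}(X,Z)$ then $T$ itself already meets the hypotheses of Theorem \ref{th-02} (the multi-smoothness argument of the previous paragraph applies verbatim to $T$), and part (ii) of that theorem delivers $x_0 \in M_T \cap E_X$ with $Tx_0 \perp_B Z^{\perp\perp}$. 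Once the multi-smoothness computation above is in place, the rest of the argument is purely bookkeeping.
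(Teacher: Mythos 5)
Your proposal is correct and follows essentially the same route as the paper: translate $T$ by a best approximant, check that every nonzero operator in $\mathcal{L}(\ell_1^n,Y)$ is multi-smooth of finite order with $M_{T_0}\cap E_X$ consisting of $\pm$ linearly independent vectors, and invoke Theorem \ref{th-02} (and its part (ii) for the final assertion). The only deviation is that you verify the multi-smoothness of the operator directly via the $\ell_\infty^n(Y)$--$\ell_1^n(Y^*)$ duality and an equality-case analysis, whereas the paper simply cites \cite[Cor.~2.3]{MP20} for this fact; your argument is a correct self-contained substitute for that citation.
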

\begin{proof}
	Suppose that $(0\neq)~S\in \mathcal{L}(X,Y).$  Since $X=\ell_1^n,$ $M_{S}\cap E_X$ is of the form $\{\pm x_i:1\leq i\leq h\},$ where $\{x_i:1\leq i\leq h\}$ is linearly independent. Moreover, $Sx_i\in Y$ is a multi-smooth point of finite order for each $i.$ Therefore, using  \cite[Cor. 2.3]{MP20} we get $S$ is a multi-smooth operator of finite order. Thus, each non-zero operator of $\mathcal{L}(X,Y)$ satisfies the hypothesis of Theorem \ref{th-02}. Now, suppose that $S_0\in \mathscr{L}_{\mathcal{L}(X,Z)}(T).$ Then $T-S_0\perp_B\mathcal{L}(X,Z).$ Note that, since $T\notin \mathcal{L}(X,Z),T-S_0\neq 0.$ Now, using Theorem \ref{th-02}, we get $x_0\in E_X$ such that 
	\begin{eqnarray*}
		&&d(T-S_0, \mathcal{L}(X,Z))=\sup_{x\in S_X}d(Tx-S_0x,Z)=d(Tx_0-S_0x_0,Z)\\
		&\Rightarrow &d(T, \mathcal{L}(X,Z))=\sup_{x\in S_X}d(Tx,Z)=d(Tx_0,Z).
	\end{eqnarray*}
On the other hand, if $T\perp_B\mathcal{L}(X,Z),$ then from (ii) of Theorem \ref{th-02}, it follows that $Tx_0\perp_B Z^{\perp\perp}$ for some $x_0\in M_T\cap E_X.$ This completes the proof. 
\end{proof}
Note that, in particular, Corollary \ref{cor-02} holds for a smooth Banach space $Y.$ Furthermore, from \cite[Cor. 2.2]{MDP}, it follows that every nonzero operator of $\mathcal{K}(H_1,H_2)$ is a multi-smooth operator of finite-order, where $H_1,H_2$ are Hilbert spaces. For other examples of Banach spaces $Y,$ where each non-zero element is a multi-smooth point of finite order see \cite{KS,LR}.
As a consequence of the results obtained here, we get the following corollary providing sufficient condition for proximinality of a subspace.
\begin{cor}\label{cor-03}
	Suppose $X=\ell_1^n,$  $Y$ is a Banach space and $Z$ is a closed subspace of $Y.$ Assume that each non-zero element of $Y$ is a multi-smooth point of finite order. Suppose that $\mathcal{L}(X,Z)$ is a  proximinal subspace of $\mathcal{L}(X,Y).$ Then $Z$ is a proximinal subspace of $Y.$
\end{cor}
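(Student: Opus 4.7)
The plan is to lift the approximation problem for a single element $y\in Y$ to an approximation problem for an operator in $\mathcal{L}(\ell_1^n,Y)$, exploit the proximinality assumption there, and then read off a best approximation to $y$ from the values of the minimizing operator.

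Fix $y\in Y$. The trivial case $y\in Z$ is immediate, so assume $y\notin Z$. The key construction is the rank--one-like operator $T\in\mathcal{L}(\ell_1^n,Y)$ defined on the standard basis by $Te_1=y$ and $Te_i=0$ for $2\leq i\leq n$; equivalently $T(a_1,\ldots,a_n)=a_1y$. Since $y\notin Z$, we have $T\notin \mathcal{L}(\ell_1^n,Z)$. By the proximinality hypothesis there exists $S_0\in\mathcal{L}(\ell_1^n,Z)$ with $\|T-S_0\|=d(T,\mathcal{L}(\ell_1^n,Z))$.

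The core step is to identify $d(T,\mathcal{L}(\ell_1^n,Z))$ with $d(y,Z)$. For the upper bound, given $z\in Z$ consider the operator $S_z(a_1,\ldots,a_n)=a_1 z$, which lies in $\mathcal{L}(\ell_1^n,Z)$ and satisfies $\|T-S_z\|=\|y-z\|$; taking the infimum over $z\in Z$ yields $d(T,\mathcal{L}(\ell_1^n,Z))\leq d(y,Z)$. For the lower bound, inequality (\ref{eq-inequal}) gives
\[
d(T,\mathcal{L}(\ell_1^n,Z))\geq \sup_{x\in S_X}d(Tx,Z)\geq d(Te_1,Z)=d(y,Z).
\]
Alternatively, the whole paragraph can be cited straight from Corollary \ref{cor-02}, whose hypotheses are met because every nonzero element of $Y$ is multi-smooth of finite order and $\mathscr{L}_{\mathcal{L}(\ell_1^n,Z)}(T)\neq\emptyset$.

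Finally, set $z_0:=S_0e_1\in Z$. Since $\|e_1\|=1$,
\[
d(y,Z)\leq \|y-z_0\|=\|(T-S_0)e_1\|\leq \|T-S_0\|=d(T,\mathcal{L}(\ell_1^n,Z))=d(y,Z),
\]
so $\|y-z_0\|=d(y,Z)$, proving that $z_0$ is a best approximation to $y$ in $Z$. There is no real obstacle here once one sees the right operator; the only conceptual point is to choose $T$ so that (a) its norm and distance-to-$\mathcal{L}(X,Z)$ both collapse to the scalar quantity $d(y,Z)$ via the $\ell_1^n$ structure, and (b) the value $S_0e_1$ of any minimizer can be read off as an element of $Z$ that inherits optimality from $S_0$. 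The extreme point $e_1$ of $B_{\ell_1^n}$ plays precisely the role of the point at which the supremum in the minimax formula is attained.
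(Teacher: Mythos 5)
Your proof is correct, but it takes a genuinely different and more elementary route than the paper. The paper defines $Tx=x^*(x)y$ with $x^*\in E_{X^*}$ and then invokes Corollary \ref{cor-02} (and hence Theorem \ref{th-02}, the multi-smoothness hypothesis on $Y$, and the extremal structure of $\mathcal{K}(X,Y)^*$) to produce an extreme point $x_0\in E_X$ with $Tx_0-Sx_0\perp_B Z$; the choice $x^*\in E_{X^*}$ is needed precisely so that $|x^*(x_0)|=1$ and $\frac{1}{x^*(x_0)}Sx_0$ can be rescaled into a best approximant. You instead identify $d(T,\mathcal{L}(\ell_1^n,Z))=d(y,Z)$ by direct two-sided estimates (rank-one competitors $S_z$ for the upper bound, evaluation at $e_1$ for the lower bound) and then read off $S_0e_1$ as a best approximation to $y$; every step is elementary and correct. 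The notable consequence is that your argument never uses the hypothesis that each non-zero element of $Y$ is a multi-smooth point of finite order, so you have in fact proved a stronger statement valid for an arbitrary Banach space $Y$; the paper's hypothesis is an artifact of routing the proof through Corollary \ref{cor-02}. Your parenthetical ``alternative'' of citing Corollary \ref{cor-02} is the only place where care would be needed: that corollary yields attainment at \emph{some} $x_0\in E_X=\{\pm e_i\}$, and one must additionally observe that $x_0=\pm e_1$ is forced (otherwise $Tx_0=0$ and $d(Tx_0,Z)=0<d(y,Z)$), but since your main argument does not rely on this route, there is no gap.
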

\begin{proof}
	Let $y\in Y\setminus Z.$ Choose $x^*\in E_{X^*}.$ Let $T\in \mathcal{L}(X,Y)\setminus\mathcal{L}(X,Z)$ be defined as $Tx=x^*(x)y$ for all $x\in X.$ From the proximinality of  $\mathcal{L}(X,Z)$ in $\mathcal{L}(X,Y),$ it follows that $\mathscr{L}_{\mathcal{L}(X,Z)}(T)\neq \emptyset.$ Let $S\in \mathscr{L}_{\mathcal{L}(X,Z)}(T).$ Then $T-S\perp_B \mathcal{L}(X,Z).$ Therefore, using Corollary \ref{cor-02}, we get $Tx_0-Sx_0\perp_B Z$ for some $x_0\in E_X.$ Observe that $|x^*(x_0)|=1,$ since $X=\ell_1^n.$ Thus, $x^*(x_0)y-Sx_0\perp_B Z,$ which yields that $\frac{1}{x^*(x_0)}Sx_0\in \mathscr{L}_Z(y).$ This completes the proof.  
\end{proof}
We would like to end the paper with the remark that  Corollary \ref{cor-03} is motivated from \cite[Cor. 5]{RAO21}. However, in \cite[Cor. 5]{RAO21}, one of the assumptions is that $Z$ is an $M-$ideal in $Y,$ which itself is a sufficient condition for the proximinality of $Z$ in $Y$ (see \cite[Prop. 1.1, pp 50]{HWW93}). Here we emphasize that in Corollary  \ref{cor-03}, $Z$ is assumed to be an arbitrary closed subspace of $Y.$

	\bibliographystyle{amsplain}

\end{document}